\def\conjecture{{\bf
        \medskip
        \noindent Conjecture.
        \nopagebreak  }\begin{em}}
\def\corollary{{\bf
        \medskip
        \noindent Corollary.
        \nopagebreak  }\begin{em}}
\def\definition{{\bf
        \medskip
        \noindent Definition.
        \nopagebreak  }\begin{em}}
\def\proposition{{\bf
        \medskip
        \noindent Proposition.
        \nopagebreak  }\begin{em}}
\newcounter{teor}
\newenvironment{theorem}{\medskip {\bf Theorem
\refstepcounter{teor}\Alph{teor}} \begin{em}}{\end{em}\medskip}
\newtheorem{lemma}[equation]{Lemma}
\def\endconjecture{\end{em} \medskip}
\def\endcorollary{\end{em} \medskip}
\def\enddefinition{\end{em} \medskip}
\def\endproposition{\end{em} \medskip}
\newcommand{\CC}{{\mathbb C}}
\newcommand{\PP}{{\mathbb P}}
\newcommand{\RR}{{\mathbb R}}
\newcommand{\ZZ}{{\mathbb Z}}
\newcommand{\cF}{{\mathcal F}}
\newcommand{\cL}{{\mathcal L}}
\newcommand{\cN}{{\mathcal N}}
\newcommand{\cT}{{\mathcal T}}
\newcommand{\cU}{{\mathcal U}}
\newcommand{\cW}{{\mathcal W}}
\newcommand{\SU}{{\mathrm{SU}}}
\newcommand{\SO}{{\mathrm{SO}}}
\newcommand{\GL}{{\mathrm{GL}}}
\newcommand{\UU}{{\mathrm{U}}}
\newcommand \Id	{\mathrm{Id}}
\begin{document}

\title[Fold-Forms for Four-Folds]{Fold-Forms for Four-Folds}
\author[]{Ana Cannas da Silva}

%%%%%%%%%%%%%%%%%%%%%%%%%%%%%%%%%%%%%%%%%%%%%%%%%%%%%%%%%%%%%%%%%%%%%%%%%%%%%%%
%%%%%%%%%%%%%%%%%%%%%%%%%%%%%%%%%%%%%%%%%%%%%%%%%%%%%%%%%%%%%%%%%%%%%%%%%%%%%%%

\thanks{Supported in part by Funda\c{c}\~{a}o para a Ci\^{e}ncia e a
Tecnologia (FCT/Portugal).
Research at the IAS was supported in part by NSF grant DMS 9729992.}

\address{Department of Mathematics, Princeton University,
Princeton, NJ 08544-1000,
USA and Departamento de Matem\'atica, Instituto Superior T\'ecnico,
Avenida Rovisco Pais, 1049-001 Lisboa, Portugal}
\email{acannas@math.princeton.edu}

%%%%%%%%%%%%%%%%%%%%%%%%%%%%%%%%%%%%%%%%%%%%%%%%%%%%%%%%%%%%%%%%%%%%%%%%%%%%%%
%%%%%%%%%%%%%%%%%%%%%%%%%%%%%%%%%%%%%%%%%%%%%%%%%%%%%%%%%%%%%%%%%%%%%%%%%%%%%%

\begin{abstract}
This paper explains an application of Gromov's h-principle
to prove the existence,
on any orientable 4-manifold, of a folded symplectic form.
That is a closed 2-form which is symplectic
except on a separating hypersurface where the form singularities
are like the pullback of a symplectic form by a folding map.
We use the h-principle for folding maps (a theorem of Eliashberg)
and the h-principle for symplectic forms on open manifolds
(a theorem of Gromov) to show that, for orientable even-dimensional
manifolds, the existence of a stable almost complex structure
is necessary and sufficient to warrant the existence of
a folded symplectic form.
\end{abstract}

\maketitle

%%%%%%%%%%%%%%%%%%%%%%%%%%%%%%%%%%%%%%%%%%%%%%%%%%%%%%%%%%%%%%%%%%%%%%%%%%%%%%
%%%%%%%%%%%%%%%%%%%%%%%%%%%%%%%%%%%%%%%%%%%%%%%%%%%%%%%%%%%%%%%%%%%%%%%%%%%%%%

\section{Introduction}
\label{sec:introduction}

One says that a differential problem satisfies the h-principle
if any formal solution (i.e., a solution for the associated algebraic
problem) is homotopic to a genuine (i.e., differential) solution.
Therefore, when the h-principle holds, one may concentrate on a
purely topological question in order to prove the existence of
a differential solution.

Differential problems are equations, inequalities or, more generally,
relations~\cite{gromov:book} involving derivatives of maps.
The following are examples of problems known to satisfy the
h-principle: existence of immersions in strictly positive codimension
(theorems of Whitney~\cite{whitney:curves}, Nash~\cite{nash:isometric},
Kuiper~\cite{kuiper:isometric}, Smale~\cite{smale:classification},
Hirsch~\cite{hirsch:immersions} and Po\'enaru~\cite{poenaru:immersions}),
existence of symplectic forms on open manifolds (theorem of
Gromov~\cite{gromov:stable}, who built  the general machinery of the
h-principle as an obstruction theory for the sheaves of germs of maps)
and existence of maps whose only singularities are folds (theorem of
Eliashberg~\cite{eliashberg:foldingtype, eliashberg:surgery}).

This paper explains an application of the h-principle to prove the existence,
on any compact orientable 4-manifold, of a folded symplectic form, that is,
a closed 2-form with only fold singularities as defined below.
According to the h-principle philosophy, this proof is divided in two steps:
\begin{enumerate}
\item
show that the h-principle holds for this problem,
\item
show that a formal solution exists.
\end{enumerate}

For the first step, the basic ingredients are the h-principle for maps
whose only singularities are
folds~\cite{eliashberg:foldingtype,eliashberg:surgery} and the h-principle
for symplectic forms on open manifolds~\cite{gromov:stable}.
This combination is a shortcut based on an idea contained in the
forthcoming book by Eliashberg and Mishachev~\cite{eliashberg-mishachev:book}.
We thus avoid dealing with the h-principle in its generality.

Here is the flavor of Eliashberg's result.
Let $Z$ be a hypersurface in a manifold $M$,
that is, a codimension 1 embedded submanifold
(this is the meaning of {\em hypersurface} throughout this paper).
A map $f: M \to N$ between manifolds of the same dimension is called
a {\em $Z$-immersion} (or said to {\em fold along the submanifold $Z$})
if it is regular (i.e., its derivative is invertible) on $M \setminus Z$, and
if near any $p \in Z$ and near its image $f(p)$ there are coordinates
centered at those points where $f$ becomes
\[
   ( x_1, x_2, \ldots , x_n ) \longmapsto (x_1^2, x_2 , \ldots, x_n ) \ .
\]
A homomorphism $F: TM \to TN$ between tangent bundles is called
a {\em $Z$-monomorphism}, if it is injective on $T(M \setminus Z)$ and on
$TZ$, and if there exists a fiber involution $\tau : \cT \to \cT$ on a tubular
neighborhood $\cT$ of $Z$ whose set of fixed points is $Z$ and such
that $F \circ d\tau = F$.
The differential $df : TM \to TN$ of a $Z$-immersion is a $Z$-monomorphism.
Eliashberg~\cite{eliashberg:foldingtype} proved that, if every connected
component of $M \setminus Z$ is open, then any $Z$-monomorphism
$TM \to TN$ is homotopic (within $Z$-monomorphisms $TM \to TN$) to
the differential of a $Z$-immersion.
In the language of~\cite{gromov:book}, the theorem says that,
when $M \setminus Z$ is open, $Z$-immersions satisfy the
(everywhere $C^0$-dense) h-principle; a $Z$-monomorphism
is then called a {\em formal solution}.
For the present application, we require a more general
statement~\cite{eliashberg:surgery} dealing with foliated
target manifolds.

A {\em folded symplectic form} on a $2n$-dimensional manifold $M$ is a closed
2-form $\omega$ which is nondegenerate except on a hypersurface $Z$
called the {\em folding hypersurface} where, centered at every point $p \in Z$,
there are coordinates for $M$ adapted to $Z$ where the form $\omega$ becomes
\[
   x_1 dx_1 \wedge dx_2 + dx_3 \wedge dx_4 + \ldots + dx_{2n-1} \wedge dx_{2n} \ .
\]
The pullback of a symplectic form by a $Z$-immersion is a folded
symplectic form with folding hypersurface $Z$.

A formal solution for the problem of existence of a folded
symplectic form turns out to be a stable almost complex structure.
Let $M$ be a $2n$-dimensional manifold with a structure of complex
vector bundle on $TM \oplus \RR^2$, where $\RR^2$ denotes the trivial
rank 2 real vector bundle over $M$.
We will show that $M$ admits folded symplectic forms.

Here is how Gromov's theorem comes in.
We embed $M$ as level zero in $M \times \RR$.
The given stable almost complex structure on $M$ yields a complex
hyperplane field on $M \times \RR$ and hence an almost complex
structure on $M \times \RR^2$.
Since this manifold is open, Gromov's application of the
h-principle~\cite{gromov:stable} guarantees the existence
of a symplectic form on $M \times \RR^ 2$ inducing almost complex
structures in the same homotopy class as the given one.
Since $M \times \RR$ sits here as a codimension one submanifold,
the restriction $\omega_0$ of the symplectic form to this submanifold
has maximal rank, i.e., has exactly a one-dimensional kernel at every point.
Let $\cL$ be the one-dimensional foliation determined by the kernel $L$ of $\omega_0$.
The projection of $\omega_0$ to $T (M \times \RR) / L$
is well-defined and nondegenerate.
Suppose that we could immerse $M$ in $M \times \RR$ in a {\em good}
way, meaning that locally the composition of that immersion
with the projection to the local leaf space of $\cL$ is a
$Z$-immersion, for some hypersurface $Z$ in $M$.
Since this leaf space is symplectic, by pullback we would
obtain a folded symplectic form on $M$.
Hence, we concentrate on deforming the initial embedding at
level zero into a good immersion in order to prove:

\begin{theorem}
\label{theorA}
Let $M$ be a $2n$-dimensional manifold with a stable almost complex structure $J$.
Then $M$ admits a folded symplectic form consistent with $J$
in any degree 2 cohomology class.
\end{theorem}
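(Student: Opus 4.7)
The plan is to carry out the three-step strategy sketched in the introduction: first promote the stable structure $J$ to a symplectic form on $M\times\RR^2$ via Gromov's h-principle; next restrict to $M\times\RR$ to get a closed 2-form whose one-dimensional kernel distribution has symplectic transverse geometry; and finally realize $M$ as a fold-immersed hypersurface in $M\times\RR$ whose pullback is the desired folded symplectic form.

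For the setup step, use the stable structure $J$ on $TM\oplus\RR^2$ and the canonical identification $T(M\times\RR^2)|_{M\times\{0\}}\simeq TM\oplus\RR^2$, extended by the obvious trivialization along the $\RR^2$-factor, to define an almost complex structure $\widetilde J$ on the open manifold $M\times\RR^2$. Given any target class $a\in H^2(M;\RR)$, pull it back under the projection to a class on $M\times\RR^2$. Gromov's h-principle then produces a symplectic form $\Omega$ on $M\times\RR^2$ in this class whose homotopy class of linear almost complex structures is that of $\widetilde J$. Restrict $\Omega$ to the odd-dimensional submanifold $M\times\RR$; after a small perturbation of $\Omega$ within its class, the resulting closed 2-form $\omega_0$ has corank exactly one, with kernel a line subbundle $L\subset T(M\times\RR)$ integrating to a foliation $\cL$. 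The quotient bundle $T(M\times\RR)/L$ has rank $2n$ and inherits a fiberwise symplectic form from $\omega_0$.

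For the formal-solution step, consider the zero-section inclusion $\iota_0\colon M\hookrightarrow M\times\{0\}\subset M\times\RR$ and the composition
\[
   F_0 \;=\; \pi\circ d\iota_0\colon TM \longrightarrow T(M\times\RR)/L
\]
of its differential with the projection to the quotient. This is a bundle map between rank-$2n$ bundles, and its rank drops precisely on
\[
   Z \;=\; \{\,p\in M : L_{\iota_0(p)} \subset T_{\iota_0(p)}(M\times\{0\})\,\}.
\]
A transverse-to-distribution argument, arranged again by slightly perturbing $\Omega$, makes $Z$ a smooth separating hypersurface in $M$ (so each connected component of $M\setminus Z$ is open). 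Near $Z$, deform $F_0$ through bundle homomorphisms to a genuine $Z$-monomorphism valued in the foliated target: exhibit a tubular neighborhood $\cT$ of $Z$ together with a fiber involution $\tau$ fixing $Z$, using the direction of tangency as the $-1$ eigendirection, and arrange $F\circ d\tau = F$. This $F$ is the formal solution required by the foliated relative version of Eliashberg's theorem.

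Finally, apply the foliated-target version of Eliashberg's h-principle quoted at the end of the introduction: $F$ is homotopic, through foliated $Z$-monomorphisms, to the differential of an actual $Z$-immersion $\iota\colon M\to(M\times\RR,\cL)$, meaning $\iota$ is transverse to $\cL$ off $Z$ and has simple fold tangencies along $Z$. The pullback $\iota^*\omega_0$ is then non-degenerate off $Z$, because locally $\iota$ followed by projection to the leaf space is a diffeomorphism onto a symplectic leaf space; along $Z$, the prescribed local form of the fold composed with the symplectic leaf-space form yields exactly the required model $x_1\,dx_1\wedge dx_2 + dx_3\wedge dx_4 + \cdots + dx_{2n-1}\wedge dx_{2n}$. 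Homotoping $\iota$ to $\iota_0$ shows $[\iota^*\omega_0]=[\iota_0^*\omega_0]=a$ and that the resulting almost complex structure is in the stable class of $J$. The step I expect to be the main obstacle is the transition between the second and third paragraphs: building a \emph{foliated} $Z$-monomorphism from $F_0$ with the correct involution symmetry and verifying that the form of Eliashberg's theorem in \cite{eliashberg:surgery} genuinely applies to the non-product foliation $\cL$; the rest is bookkeeping of cohomology classes and homotopy classes of compatible almost complex structures.
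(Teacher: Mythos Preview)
Your three-step strategy matches the paper's, and your first and third steps are essentially what the paper does in \S\ref{sec:first} and \S\ref{sec:second}.  The genuine gap is in your formal-solution step.  You take $Z$ to be the tangency locus of $M\times\{0\}$ with the foliation $\cL$ and then propose to deform $F_0=\pi\circ d\iota_0$ into a $Z$-monomorphism using ``the direction of tangency as the $-1$ eigendirection.''  But for $n\geq 2$ a generic perturbation of $\Omega$ (or of $\iota_0$) does \emph{not} make the tangencies of $M\times\{0\}$ with $\cL$ all of fold type: the locally defined map from $M$ to the $2n$-dimensional leaf space is a generic map between equidimensional manifolds, and by Thom--Boardman theory such maps have cusps (and worse) along proper subsets of the singular hypersurface.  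At a cusp point $\ker F_0$ lies inside $T_pZ$, so it cannot serve as the normal direction for your involution $\tau$, and the required condition $\pi_L\circ F|_{TZ}$ injective fails.  More fundamentally, the existence of \emph{any} $Z$-monomorphism relative to $L$ (for your fixed $Z$) is equivalent to a bundle isomorphism ${^Z TM}\simeq i^*H$, and nothing in your construction forces the Euler numbers of these two bundles to agree.

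The paper's remedy is to abandon the tangency locus entirely and instead \emph{choose} $Z$ so that the isomorphism ${^Z TM}\simeq i^*H$ holds: this is Lemma~\ref{lem:isomorphism} in \S\ref{sec:isomorphism}, where one starts from any separating hypersurface $Z_0$, computes the even integer $\chi(i^*H)-\chi({^{Z_0}TM})=2k$, and adjusts $Z_0$ by adding $|k|$ homologically trivial $n$-spheres on the appropriate side to match Euler numbers; stable isomorphism plus equal Euler number then gives the honest isomorphism.  From that isomorphism the paper builds first a $Z$-monomorphism $F_2$ relative to $L$ (your $\tau$-equivariance is now automatic from the very definition of ${^Z TM}$), and then---this is a second nontrivial step you skate over---upgrades it in \S\ref{sec:proof} to a $Z$-monomorphism relative to $\cL$, which by definition must be the differential of an honest $Z$-immersion on a neighborhood of $Z$; this upgrade is done by a finite patching using the local version of Eliashberg's theorem and the covering homotopy property.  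Only then does Lemma~\ref{lem:second} apply.
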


The notion of consistency is explained in \S~\ref{sec:folded}.
The existence of a stable almost complex structure is
a necessary condition for the existence of
a folded symplectic form on an orientable manifold
(see \S~\ref{sec:folded}).
Theorem~A is then saying that it is also sufficient.
This contrasts with the case of a (honest) symplectic form,
for whose existence an almost complex structure is necessary,
but only sufficient if the manifold is open~\cite{gromov:stable}.
The sphere $S^6$ is a trivial example (thanks to Stokes' theorem)
and $\CC \PP^2 \# \CC \PP^2 \# \CC \PP^2$ is an important example
(thanks to Seiberg-Witten invariants~\cite{ta:invariants})
of almost complex manifolds without any symplectic form.

To produce a formal solution for 4-manifolds is easily accomplished.
Hirzebruch and Hopf~\cite{hirzebruch-hopf} showed that
the integral Stiefel-Whitney class $W_3$
vanishes for any compact orientable 4-manifold, or, in other words,
such manifolds always have stable almost complex structures.
(This is the same reason why such manifolds are
spin-c~\cite[Thm.D.2]{la-mi:spin}.)
Since we are in the stable range, it is enough to add a trivial
$\RR^2$ bundle to $TM$ for this to admit a structure of complex
vector bundle.
All this is also true when $M$ is not compact~\cite[\S 5.7]{gompf-stipsicz}.
We thus obtain the following relevant special case of Theorem~A:

\begin{theorem}
\label{theorB}
Let $M$ be an orientable 4-manifold.
Then $M$ admits a folded symplectic form consistent with any
given stable almost complex structure and in any degree 2 cohomology class.
\end{theorem}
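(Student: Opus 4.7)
The plan is to derive Theorem B as an immediate corollary of Theorem A, once we verify that every orientable 4-manifold carries a stable almost complex structure. All the h-principle machinery is already packaged into Theorem A; what remains is a purely topological statement in dimension four.

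First, I would show that for an orientable 4-manifold $M$, the bundle $TM \oplus \RR^2$ admits a complex vector bundle structure. By standard obstruction theory, the only obstruction in the stable range for lifting the structure group from $\SO$ to $\UU$ is the integral third Stiefel--Whitney class $W_3(M) \in H^3(M;\ZZ)$, which is the Bockstein of $w_2(M)$. The vanishing of $W_3$ for compact orientable 4-manifolds is the classical theorem of Hirzebruch--Hopf recalled in the introduction; equivalently, such manifolds are $\Spinc$. The non-compact case reduces to the same statement, as recorded in Gompf--Stipsicz, either by choosing a lift of $w_2$ over a $2$-skeleton and extending, or by noting that $H^3(M;\ZZ)$ is detected on compact exhaustions.

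Second, given such a stable almost complex structure $J$ on $M$ and a chosen degree $2$ cohomology class $\alpha$, I would invoke Theorem A directly to produce a folded symplectic form on $M$ consistent with $J$ and representing $\alpha$. No additional deformation or extension argument is needed at this stage, since consistency and the cohomology class are already built into Theorem A.

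The main (indeed only) obstacle is the topological step of producing the stable almost complex structure, and this is genuinely dimension-specific: in higher dimensions orientable manifolds need not be stably almost complex, so the unconditional form of Theorem B relies on low-dimensional topology. Within dimension $4$, however, Hirzebruch--Hopf reduces this to the cohomological identity $W_3=0$, and the proof of Theorem B then collapses to a one-line citation of Theorem A.
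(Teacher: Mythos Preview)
Your proposal is correct and matches the paper's own derivation: the paper explicitly presents Theorem~B as the special case of Theorem~A obtained by invoking the Hirzebruch--Hopf result that $W_3(M)=0$ for any orientable 4-manifold (with the noncompact case handled via the same Gompf--Stipsicz reference you cite), hence $TM\oplus\RR^2$ admits a complex structure and Theorem~A applies. No further argument is needed, and your identification of the dimension-specific input is exactly right.
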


In higher dimensions, there are plenty of orientable manifolds
which have no stable almost complex structures
($S^1 \times \SU (3) / \SO (3)$, for instance~\cite{la-mi:spin}),
and hence cannot have folded symplectic forms.
The condition $W_3 (M) =0$ is necessary and sufficient in
dimensions 6 (since the next obstruction $W_7$ vanishes for
dimensional reasons) and 8 (where Massey~\cite{massey:classesII}
proved that $W_7$ always vanishes).
According to~\cite{dessai:remarks,thomas:complex}, until 1998
it was still not known general necessary and sufficient
conditions (in terms of invariants such as characteristic
classes and the cohomology ring) for the existence of a
stable almost complex structure on manifolds of dimension $\geq 10$.

As for the contents of this paper:
\S~2 reviews folded symplectic manifolds and some
{\em folded} tangent bundles associated to them;
\S~3 describes the application of Gromov's theorem to
guarantee a symplectic form starting from a structure
of complex vector bundle;
\S~4 proves the existence of an isomorphism between
a {\em folded} tangent bundle and a suitable complex vector bundle;
\S~5 describes the application of Eliashberg's theorem
to produce folded symplectic forms;
\S~6 contains the conclusion of the proof of Theorems~A and~B.

{\noindent{\sc Acknowledgements:}}

The author wishes to thank Yasha Eliashberg, Gustavo Granja
and M\'elanie Bertelson for helpful discussions.

Y. Eliashberg has pointed out that Theorem~A can be alternatively
deduced from the Singular h-Principle Theorem of Gromov~\cite[p.112]{gromov:book}
following a hint in Exercise~(b) of page 113 in Gromov's book to overcome
the lack of microflexibility.

%%%%%%%%%%%%%%%%%%%%%%%%%%%%%%%%%%%%%%%%%%%%%%%%%%%%%%%%%%%%%%%%%%%%%%%%%%%%%%
%%%%%%%%%%%%%%%%%%%%%%%%%%%%%%%%%%%%%%%%%%%%%%%%%%%%%%%%%%%%%%%%%%%%%%%%%%%%%%

\section{Folded Symplectic Manifolds}
\label{sec:folded}

Let $M$ be an oriented manifold of dimension $2n$,
and let $\omega$ be a closed 2-form on $M$.
The highest wedge power $\omega^n$ is a section of the
(trivial) orientation bundle $\wedge ^{2n} T^*M$.

\begin{definition}
A {\em folded symplectic form} is a
closed 2-form $\omega$ such that $\omega ^n$ intersects
the 0-section of $\wedge ^{2n} T^*M$ tranversally,
and such that $\imath ^* \omega$ has maximal rank everywhere,
where $\imath : Z \hookrightarrow M$ is the inclusion of
the zero-locus, $Z$, of $\omega ^n$.
\end{definition}

By tranversality, $Z$ is a codimension-1 submanifold
of $M$, called the {\em folding hypersurface}.
A {\em folded symplectic manifold} is a pair $(M,\omega)$
where $\omega$ is a folded symplectic form on $M$.
The folding hypersurface $Z$ of a folded symplectic
manifold $(M,\omega)$ separates $M$ into the regions $M^+$ and $M^-$,
where the form matches or is opposite to the
given orientation, respectively.
Hence, $Z$ has a co-orientation depending on $\omega$
and on the choice of orientation on $M$.
(The notion of folded symplectic form extends to arbitrary
even-dimensional manifolds, not necessarily orientable,
but we will not deal with those in this paper.)

The Darboux theorem for folded symplectic forms states
that, if $(M,\omega)$ is a folded symplectic manifold and
$p$ is any point on the folding hypersurface $Z$, then there
is a coordinate chart $(\cU, x_1, \ldots ,x_{2n})$
centered at $p$ such that on $\cU$
\[
   \omega = x_1 dx_1 \wedge dx_2 + dx_3 \wedge dx_4
   + \ldots + dx_{2n-1} \wedge dx_{2n}
   \; \mbox{ and } \; Z \cap \cU = \{ x_1 = 0 \} \ .
\]
This follows, for instance, from a folded analogue
of Moser's trick~\cite{cgw:unfolding}.

Doubles of symplectic manifolds with
$\omega$-convex~\cite{el-gr:convex} (or $\omega$-concave)
boundary are easy examples of manifolds with folded
symplectic forms.
Simplest instances are the spheres $S^{2n}$, where
a folded symplectic form is obtained by pulling back
the standard symplectic form on $\RR^{2n}$
via the folding map $S^{2n} \to D^{2n}$.

Starting in dimension 4, folded symplectic forms are
not generic in the set of closed 2-forms.
Let $M$ be a (compact) oriented 4-manifold,
and let $\omega$ be a closed 2-form on $M$.
If $\gamma$ is a given volume form on $M$, then
$\omega \wedge \omega = f \gamma$ for some $f \in C^\infty (M)$.
A generic $\omega$~\cite{ma:formes} is never 0,
has rank 2 on a (compact) codimension-1 submanifold, $Z$, and
is nondegenerate elsewhere.
The hypersurface $Z$ is the 0-locus of $f$.
Its complement $M \setminus Z$ is the disjoint union of the sets
$M^+ = \{ f > 0 \}$ where $\omega$ matches the given orientation and
$M^- = \{ f < 0 \}$ where $\omega$ induces the opposite orientation.
For $\omega$ to be folded symplectic, we would need that
$TZ$ and the rank 2 bundle over $Z$ given by $\ker \omega$
intersect transversally as subbundles of $TM |_Z$.
Yet generically $\omega$ is not folded symplectic,
since its restriction to $Z$ vanishes along
some codimension-2 submanifold $C$ (a union of circles),
where $\ker \omega$ is contained in $TZ$~\cite{ma:formes}.
Although a generic 2-form on a 3-manifold
vanishes only at isolated points,
here the 3-manifold already depends on the 2-form.
Moreover, generically there are isolated {\em parabolic}
points on those lines (circles),
where the tangent space to those lines is contained in $\ker \omega$.
There is at least one continuous family of inequivalent neighborhoods
of parabolic points~\cite{arnold-givental,golubitsky-tischler}.

Now let $M$ be an $m$-dimensional manifold
with a separating hypersurface $Z$.
For instance, $M$ could be an oriented manifold equipped with
a folded symplectic form, and $Z$ its folding hypersurface.

The complement $M \setminus Z$ is the disjoint union
of open sets $M^+$ and $M^-$.
Over $Z$, the tangent bundle has a trivial line subbundle $V$,
spanned by a vector field transverse to $Z$ pointing from $M^-$ to $M^+$.
The quotient $TM / V$ is isomorphic to $TZ$, so that $TM |_Z \simeq TZ \oplus V$.

\begin{definition}
The {\em $Z$-tangent bundle} of $M$
is the rank $m$ real vector bundle ${^Z TM}$ over $M$ obtained
by gluing $TM |_{M \setminus M^-}$ to $TM |_{M \setminus M^+}$
by the constant diagonal map $\Id \oplus (-1) : Z \to \GL (TZ \oplus V)$.
\end{definition}

There are analytic and algebraic approaches to ${^Z TM}$,
which enhance its geometry~\cite{cgw:unfolding}.
From its definition it follows that:

\begin{lemma}
\label{lem:stableisom}
Let $M$ be an $m$-dimensional manifold with
a separating hypersurface $Z$.
Then there is an isomorphism of real vector bundles
\[
   TM \oplus \RR \simeq {^Z TM} \oplus \RR \ .
\]
\end{lemma}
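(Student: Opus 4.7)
The plan is to realise both $TM \oplus \RR$ and ${^Z TM} \oplus \RR$ as clutching constructions with the same local pieces and to use the extra trivial $\RR$-summand to intertwine their transition functions across $Z$.

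By the very definition of ${^Z TM}$, the bundle ${^Z TM} \oplus \RR$ is obtained by gluing $TM|_{\overline{M^+}} \oplus \RR$ and $TM|_{\overline{M^-}} \oplus \RR$ along $Z$ via the transition $\tilde\sigma := \Id_{TZ} \oplus (-1)_V \oplus \Id_\RR$ on $TZ \oplus V \oplus \RR$, while $TM \oplus \RR$ is the corresponding gluing by the identity. It therefore suffices to produce a smooth bundle automorphism $G$ of $TM|_{\overline{M^-}} \oplus \RR$ satisfying $G|_Z = \tilde\sigma$: the desired isomorphism is then the identity on $\overline{M^+}$ combined with $G$ on $\overline{M^-}$.

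To build $G$, I compare $\tilde\sigma$ with the globally defined involution $\tau := \Id_{TM} \oplus (-1)_\RR$ of $TM \oplus \RR$. Over $Z$, both maps act as the identity on $TZ$ and as reflections of the rank-two fibre $V_z \oplus \RR$: the map $\tilde\sigma$ reflects across the $\RR$-axis, whereas $\tau|_Z$ reflects across the $V$-axis, and a smooth one-parameter family of reflections $S_\theta$ of $V_z \oplus \RR$ whose axis rotates from angle $0$ to angle $\pi/2$ connects the latter to the former. On a tubular collar $Z \times [0,\delta) \hookrightarrow \overline{M^-}$ along which $TM$ splits as $TZ \oplus V$, choose a smooth cutoff $\theta : [0,\delta) \to [0,\pi/2]$ with $\theta(0) = \pi/2$ and $\theta \equiv 0$ near $\delta$, set $G(z,t) := \Id_{TZ} \oplus S_{\theta(t)}$ on the collar, and take $G := \tau$ on the complement of the collar in $\overline{M^-}$. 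These two definitions agree on the overlap because $S_0$ equals $\tau|_Z$ under the collar splitting, so $G$ is a smooth bundle automorphism with $G|_Z = \tilde\sigma$, finishing the proof.

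The main subtlety, which is precisely why the lemma is stated up to stabilisation, is that $\tilde\sigma$ has fibrewise determinant $-1$, so on $TM$ alone no such extension could exist. The trivial $\RR$-summand is introduced to accommodate the globally defined orientation-reversing involution $\tau$, which lies in the same connected component of the fibrewise general linear group as $\tilde\sigma$ over $Z$ and thereby enables the interpolation above.
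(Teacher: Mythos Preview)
Your argument is correct. The paper does not actually prove this lemma --- it simply writes ``From its definition it follows that'' and states the result --- so there is no proof to compare against; your clutching argument supplies precisely the details one would expect. The essential point is the one you isolate in your final paragraph: the transition $\tilde\sigma$ has fibrewise determinant $-1$ and hence cannot be extended over $\overline{M^-}$ as an automorphism of $TM$ alone, but after stabilising by $\RR$ it lies in the same path component of $\GL(V\oplus\RR)$ as the globally defined reflection $\tau=\Id_{TM}\oplus(-1)_\RR$, which furnishes the extension $G$.

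One cosmetic refinement: to make the glued map $\Id\cup G$ smooth (rather than merely continuous) across $Z$, arrange also that $\theta\equiv\pi/2$ on an initial interval $[0,\epsilon]$, mirroring your condition $\theta\equiv 0$ near $\delta$. Then $G$ agrees with the collar-constant extension of $\tilde\sigma$ on a neighbourhood of $Z$ in $\overline{M^-}$, and the two pieces patch to all orders. (Alternatively one can just quote that a continuous isomorphism of smooth vector bundles is homotopic to a smooth one.)
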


A {\em complex structure} on a vector bundle $E$ over a manifold $M$
is a bundle homomorphism $J: E \to E$ such that $J^2 = - \Id$.
If $E$ is an orientable rank $2m$ vector bundle,
the existence of a complex structure on $E$ is equivalent
to the existence of a section of the associated
$(\SO (2m) / \UU (m))$-bundle.
A {\em stable complex structure} on a vector bundle $E$ over $M$
is an equivalence class of complex structures on the vector
bundles $E \oplus \RR^k$ ($k \in \ZZ_0 ^+$), two complex
structures, $J_1$ on $E \oplus \RR^{k_1}$ and $J_2$ on
$E \oplus \RR^{k_2}$, being {\em equivalent} when there exist
$m_1, m_2 \in \ZZ_0 ^+$ such that
$((E \oplus \RR^{k_1}) \oplus \CC^{m_1},J_1 \oplus i)$ and
$((E \oplus \RR^{k_2}) \oplus \CC^{m_2},J_2 \oplus i)$
are isomorphic complex vector bundles.
A {\em stable almost complex structure} on $M$ is a
stable complex structure on $TM$.

The $Z$-tangent bundle for the folding hypersurface
$Z$ of a folded symplectic form $\omega$ has a
canonical complex structure $J_0$~\cite{cgw:unfolding}
{\em consistent} with $\omega$.
We say that a folded symplectic form $\omega$ is {\em consistent}
with a stable almost complex structure on $M$ if
$({^Z TM} \oplus \CC , J_0 \oplus i)$ belongs to the given
equivalence class of complex structures on $TM \oplus \RR^{2k}$,
$k \in \ZZ_0^+$.

%%%%%%%%%%%%%%%%%%%%%%%%%%%%%%%%%%%%%%%%%%%%%%%%%%%%%%%%%%%%%%%%%%%%%%%%%%%%%%
%%%%%%%%%%%%%%%%%%%%%%%%%%%%%%%%%%%%%%%%%%%%%%%%%%%%%%%%%%%%%%%%%%%%%%%%%%%%%%

\section{First Instance of the h-Principle}
\label{sec:first}

Let $M$ be a $2n$-dimensional manifold with a stable almost complex structure.
The homotopy groups $\Pi_q (\SO (2m) / \UU (m))$ are isomorphic
for fixed $q$ and variable $m$ such that $q < 2m-1$
(this is the so-called {\em stable range}~\cite{massey:obstructions}).
Hence, if there exists a complex structure on $TM \oplus \RR^{2k}$,
then there exists a complex structure on $TM \oplus \RR^2$.

Let $J$ be a complex structure on $TM \oplus \RR^2$.
Let
\[
\begin{array}{rrclcrrcl}
   i : & M & \hookrightarrow & M \times \RR
   & \; \mbox{ and } \; &
   \pi : & M \times \RR & \twoheadrightarrow & M \\
   & p & \mapsto & (p,0) & &
   & (p,t) & \mapsto & p
\end{array}
\]
be the embedding at level zero, and the projection to the first factor.
By pullback, $i$ induces an isomorphism in cohomology.

Via the identification $T(M \times \RR) \simeq \pi^* (TM) \oplus \RR$,
the structure $J$ induces a structure of complex vector bundle,
still called $J$, on $T(M \times \RR) \oplus \RR \simeq \pi^* (TM) \oplus \CC$.
Then the complex subbundle
\[
   H_0 = T (M \times \RR) \cap J (T (M \times \RR))
   \subset T(M \times \RR) \oplus \RR
\]
is a complex hyperplane field over $M \times \RR$.
Let $\omega_1$ be a 2-form of maximal rank in $M \times \RR$
{\em compatible with $J$}, that is,
\[
   \omega_1 (u,v) = g(J u,v)\ , \ \forall u,v \in H_0 \ ,
   \mbox{ and } \omega_1 (u , \cdot) = 0 \ , \ \forall u \in H_0^\perp \ ,
\]
for some riemannian metric $g$ on $TM \times \RR$, where
$H_0^\perp$ denotes the orthocomplement of $H_0$ with respect to $g$.
A {\em regular homotopy} of two 2-forms of maximal rank
is a homotopy within 2-forms of maximal rank.

\begin{lemma}
\label{lem:first}
Let $M$ be a manifold with a structure $J$
of complex vector bundle on $TM \oplus \RR^2$.
Then  there exists in $M \times \RR$ a closed 2-form of
maximal rank in any degree 2 cohomology class,
which is regularly homotopic to any 2-form of maximal
rank compatible with $J$.
\end{lemma}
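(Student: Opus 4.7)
The plan is to apply Gromov's h-principle on the ambient open $(2n+2)$-manifold $M\times\RR^2$, where $J$ is a genuine almost complex structure, and then restrict the resulting symplectic form to the codimension-one submanifold $M\times\RR\subset M\times\RR^2$. First I pick any $J$-invariant Riemannian metric $g$ on $T(M\times\RR^2)$ and let $\Omega^\flat(u,v):=g(Ju,v)$ be the associated $J$-compatible non-degenerate 2-form. Gromov's theorem then produces, in any prescribed de Rham class $\alpha\in H^2(M\times\RR^2;\RR)$, a symplectic form $\Omega$ representing $\alpha$ together with a homotopy $\{\Omega_s\}_{s\in[0,1]}$ of non-degenerate 2-forms joining $\Omega^\flat$ to $\Omega$.

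Next I restrict the homotopy to $M\times\RR$, setting $\omega_s:=\Omega_s|_{M\times\RR}$. At each point $p$ the subspace $T_p(M\times\RR)$ has odd dimension $2n+1$ and so cannot be $\Omega_s(p)$-symplectic: its symplectic orthogonal is a line that must lie inside it and is precisely $\ker\omega_s(p)$. Hence every $\omega_s$ has maximal rank, $\{\omega_s\}$ is a regular homotopy of maximal-rank forms, and $\omega_0:=\Omega|_{M\times\RR}$ is closed. A short calculation using $J$-invariance of $g$ verifies that $\omega^\flat:=\Omega^\flat|_{M\times\RR}$ satisfies the lemma's compatibility (its kernel is $H_0^\perp$ and it restricts to $g(J\cdot,\cdot)$ on $H_0$).

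Since both $M\times\RR^2$ and $M\times\RR$ deformation retract onto $M$, the pullback $H^2(M\times\RR^2;\RR)\to H^2(M\times\RR;\RR)\cong H^2(M;\RR)$ is an isomorphism, so taking $\alpha$ to be the preimage of any prescribed class $\beta\in H^2(M;\RR)$ forces $[\omega_0]=\beta$. To connect to an arbitrary $J$-compatible maximal-rank form $\omega_1$ (and not just the particular $\omega^\flat$ produced above), I linearly interpolate: both $\omega_1$ and $\omega^\flat$ are positive on the common complex hyperplane $H_0$, so each convex combination $(1-t)\omega_1+t\omega^\flat$ is non-degenerate on $H_0$ and hence of maximal rank on the odd-dimensional $M\times\RR$. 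Concatenating this regular homotopy with $\{\omega_s\}$ yields a regular homotopy from $\omega_1$ to the closed form $\omega_0$.

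The main obstacle is the correct invocation of Gromov's theorem---the version that simultaneously produces a symplectic form in a prescribed cohomology class and keeps it homotopic through non-degenerate 2-forms to a chosen starting form. Once that is at hand, all remaining steps reduce to codimension-one linear algebra (an odd-dimensional subspace of a symplectic vector space has a one-dimensional symplectic kernel) and the convexity of $J$-compatibility.
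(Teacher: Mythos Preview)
Your proof is correct and follows essentially the same route as the paper: both apply Gromov's h-principle on the open $(2n+2)$-manifold $M\times\RR^2$ and then restrict the resulting symplectic form (and the homotopy through nondegenerate forms) to the codimension-one slice $M\times\RR$. The only cosmetic difference is ordering: the paper first extends an arbitrary maximal-rank $\omega_1$ on $M\times\RR$ to a nondegenerate form on $M\times\RR^2$ (this is McDuff's proposition) and then invokes Gromov, whereas you start from $J$ on $M\times\RR^2$, invoke Gromov, restrict, and finally reach an arbitrary $J$-compatible $\omega_1$ by the convexity interpolation.
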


This is an immediate consequence of the
following proposition which was originally proved by
McDuff~\cite{mcduff:application}.
The proof below is taken from
Eliashberg-Mishachev~\cite{eliashberg-mishachev:book}.
We reproduce it since this result is not as widely known as
the other applications of the h-principle and since the idea in
this proof is crucial for the present paper's strategy.
The key to this proof is Gromov's theorem~\cite{gromov:stable} saying
that, for every degree 2 cohomology class on any open manifold,
any nondegenerate 2-form is regularly homotopic to a symplectic
form in that class; moreover, if two symplectic forms are
regularly homotopic, then they are homotopic within symplectic forms.
Recall that a manifold is {\em open} if there are no closed manifolds
(i.e., compact and without boundary) among its connected components.

\begin{proposition}\cite{mcduff:application}
For any 2-form of maximal rank on an odd-dimen\-sional
manifold and any degree 2 cohomology class, there
exists a closed 2-form of maximal rank in that class which
is regularly homotopic to the given form.
\end{proposition}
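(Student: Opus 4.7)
The plan is to promote the odd-dimensional data $(N,\omega)$ to even-dimensional data on an open manifold where Gromov's h-principle applies, and then restrict. Let $N$ denote the odd-dimensional $(2n{+}1)$-manifold, $\omega$ the given closed 2-form of maximal rank with line field of zeros $L=\ker\omega$, and $c\in H^2(N)$ the prescribed cohomology class.

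The first step is to embed $N$ as a codimension-one submanifold $i\colon N\hookrightarrow\tilde N$ of an open $(2n{+}2)$-manifold $\tilde N$ in such a way that $N$ is a deformation retract of $\tilde N$, together with a nondegenerate 2-form $\tilde\omega$ on $\tilde N$ satisfying $i^*\tilde\omega=\omega$. The naive candidate is $\tilde N=N\times\RR$ with $\tilde\omega=p^*\omega+dt\wedge p^*\alpha$, where $p\colon\tilde N\to N$ is the projection and $\alpha$ is a 1-form on $N$ positive on a chosen section of $L$; a short pointwise computation shows $\tilde\omega$ is nondegenerate provided $L$ is trivializable. In general one replaces $N\times\RR$ by the total space of $L\to N$, whose tangent bundle along the zero section is $TN\oplus L$; the rank-$2$ bundle $L\oplus L$ is always orientable (its first Stiefel--Whitney class is $2w_1(L)=0$), which permits the construction of a full-rank alternating form on $T\tilde N|_N$ extending $\omega$, after which a tubular-neighborhood argument extends $\tilde\omega$ to all of $\tilde N$.

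The second step is formal: since $i$ is a homotopy equivalence, the class $c$ lifts uniquely to $\tilde c\in H^2(\tilde N)$ with $i^*\tilde c=c$. The third step invokes Gromov's h-principle for symplectic forms on open manifolds~\cite{gromov:stable}: because $\tilde N$ is open, $\tilde\omega$ is regularly homotopic through nondegenerate 2-forms to a symplectic form $\tilde\Omega$ on $\tilde N$ representing $\tilde c$.

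Finally, set $\Omega:=i^*\tilde\Omega$. Then $\Omega$ is closed and represents $c$, and at each $p\in N$ the kernel $\ker\Omega_p=T_pN\cap(T_pN)^{\perp\tilde\Omega}$ has dimension exactly one: it cannot be zero, because $\dim N$ is odd, nor greater than one, because $T_pN$ has codimension one in $T_p\tilde N$. Hence $\Omega$ has maximal rank. Applying the same pointwise argument at each stage of the regular homotopy between $\tilde\omega$ and $\tilde\Omega$, its restriction to $N$ is a regular homotopy from $\omega$ to $\Omega$ through 2-forms of maximal rank. The principal technical hurdle is the first step, namely producing the global nondegenerate extension $\tilde\omega$ when $L=\ker\omega$ is not an orientable line bundle; once this is arranged, the remainder is a formal deployment of Gromov's theorem combined with a parity count.
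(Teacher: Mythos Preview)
Your proof is correct and follows essentially the same route as the paper: extend $\omega$ to a nondegenerate 2-form on an open $(2n{+}2)$-manifold (taking $N\times\RR$ when the kernel line bundle is trivial, and the total space of $L$ otherwise), apply Gromov's theorem in the prescribed class, and restrict back to $N$; the paper makes the identical case split, phrased in terms of orientability of $N$, which is equivalent since $w_1(TN)=w_1(L)$. One harmless slip: you call the given $\omega$ ``closed,'' but the hypothesis does not assume this and your argument never uses it.
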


\begin{proof}
Let $\omega_1$ be a 2-form of maximal rank on a
$(2n+1)$-dimensional manifold $N$ and let $\alpha$ be
a degree 2 cohomology class in $N$.
By homotopy, the projection to the first factor
$\pi : N \times \RR \to N$ induces an isomorphism in cohomology.

If $N$ is orientable, then $\omega_1$ extends in a homotopically
unique way compatible with orientations to a
nondegenerate 2-form, $\omega_2$, in $N \times \RR$.
Gromov's result~\cite{gromov:stable} cited above
guarantees the existence, in the class $\pi^* \alpha$, of a
homotopically unique symplectic form $\omega_3$ in
$N \times \RR$ regularly homotopic to $\omega_2$.
The restriction of $\omega_3$ to the zero level $M$ is
a closed 2-form of maximal rank.

If $N$ is not orientable, we replace $N \times \RR$ in the
previous argument by the total space of the real line bundle
given by the kernel of $\omega_1$.
\end{proof}

%%%%%%%%%%%%%%%%%%%%%%%%%%%%%%%%%%%%%%%%%%%%%%%%%%%%%%%%%%%%%%%%%%%%%%%%%%%%%%
%%%%%%%%%%%%%%%%%%%%%%%%%%%%%%%%%%%%%%%%%%%%%%%%%%%%%%%%%%%%%%%%%%%%%%%%%%%%%%

\section{Vector Bundle Isomorphism}
\label{sec:isomorphism}

Let $\widetilde \omega$ be a closed 2-form of
maximal rank in $M \times \RR$, and let $L$
be the line field on $M \times \RR$ given by the kernel of
$\widetilde \omega$ at each point.
By orientability of $M$, the line bundle $L$ is trivializable.
Let $\cL$ be the 1-dimensional foliation
corresponding to $L$.
Choose a complementary hyperplane field $H$ so that
$T(M \times \RR) \simeq H \oplus L$.

Let $Z_0$ be a separating hypersurface in $M$
with a coorientation.
Since by Lemma~\ref{lem:stableisom} we have that
\[
   {^{Z_0} TM} \oplus \RR \simeq TM \oplus \RR
   \simeq i^* (H \oplus L) \ ,
\]
the restriction $i^* H$ is stably isomorphic to ${^{Z_0} TM}$.
The Stiefel-Whitney classes are stable invariants,
and the mod $2$ reduction of the Euler class of an orientable
rank $m$ real vector bundle $E$ coincides with the $m$th
Stiefel-Whitney class of $E$ (see, for instance,~\cite{milnor-stasheff}).
Therefore the Euler numbers (i.e., the evaluations of the
Euler classes over the fundamental homology class) of $i^* H$
and of ${^{Z_0} TM}$ differ by an even integer, let us say
\[
   \chi (i^* H) = \chi ({^{Z_0} TM}) + 2k \ .
\]

If two stably isomorphic orientable rank $2n$ real vector
bundles over an $2n$-dimensional connected manifold have
the same Euler number, then they are isomorphic.
This was contained in the work of Dold and Whitney when
the base is a 4-manifold~\cite{dold-whitney}.
In general, this follows from observing in the diagram
\[
\begin{array}{rcl}
   & & S^{2n} \hookrightarrow \SO / \SO(2n) \\
   & \nearrow & \downarrow \\
   M^{2n} & \rightrightarrows & \mathrm{BSO} (2n) \\
   & \searrow & \downarrow \\
   & & \mathrm{BSO}
\end{array}
\]
that the fiber $\SO / \SO(2n)$ of ${\mathrm{BSO}} (2n) \to {\mathrm{BSO}}$
is $(2n-1)$-connected, that $[M^{2n}, S^{2n}]$ $\simeq \ZZ$
where the homotopy type is detected by the degree,
and that the pullback of the Euler class to $S^{2n}$ is nontrivial
(since $S^{2n} \to \mathrm{BSO} (2n)$ is the classifying map for $TS^{2n}$).

Consider the following operation on rank $m$ real vector
bundles over $m$-dimensional manifolds.
If $E$ is such a bundle and $D^m$ is a small disk in the
base manifold $M$, let $E \sharp TS^m$ be the bundle
obtained by gluing $E |_{M \setminus {\mathrm{Int}} \, D^m}$
to the trivial bundle $\RR^m$ over $D^m$ by
the characteristic map of $TS^m$, i.e., by the map
$S^{m-1} \to \SO (m)$ which characterizes the
tangent bundle of $S^m$ as the gluing over the equator
of northern and southern trivial bundles~\cite[\S 18.1]{steenrod}.
For an integer $k$, the bundle $E \sharp k TS^m$
is built analogously by taking the $k$th power of
the characteristic map of $S^m$.
By counting with orientations the vanishing points of
a section transverse to zero, we see that $E \sharp k TS^m$
has Euler characteristic $\chi (E) + 2k$.
We conclude that
\[
   i^* H \simeq {^{Z_0} TM} \sharp k TS^{2n} \ .
\]
For $k$ positive, let $Z$ be the union of $Z_0$
with $k$ homologically trivial spheres $S^n$
contained in the negative part of $M \setminus Z_0$
with respect to the given coorientation.
For $k$ negative, define $Z$ similarly but with the
spheres in the positive part of $M \setminus Z_0$.
It follows from the computations in~\cite[\S 3.9]{eliashberg:foldingtype}
that $i^* H$ and ${^Z TM}$ have the same Euler number,
and hence are isomorphic.
It is possible to start from the empty hypersurface,
in which case a coorientation is not defined.
Yet the same argument holds by taking
$Z$ to be a union of spheres (as many as half of the
absolute value of the difference of the Euler numbers
of $TM$ and of $i^* H$) whose coorientation is
determined by the sign of $k$ above.
We have thus proved the following:

\begin{lemma}
\label{lem:isomorphism}
Let $H$ be a coorientable hyperplane field in $M \times \RR$
and $i : M \hookrightarrow M \times \RR$ the inclusion
at level zero.
The restriction $i^* H$ is isomorphic to ${^Z TM}$,
where $Z$ is a separating hypersurface as described
in the previous paragraph.
\end{lemma}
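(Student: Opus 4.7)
My plan is to start from an arbitrary separating hypersurface $Z_0 \subset M$ with a chosen coorientation and to correct it by attaching small spheres until $^Z TM$ matches $i^* H$ not only stably but also in Euler number; then the classification of oriented rank $2n$ bundles over $2n$-manifolds finishes the job. First, for stable isomorphism: since $L$ is trivial and $T(M \times \RR) \simeq H \oplus L$, restriction to the zero level gives $i^* H \oplus \RR \simeq TM \oplus \RR$, and by Lemma~\ref{lem:stableisom} this in turn is isomorphic to ${^{Z_0} TM} \oplus \RR$. Hence $i^* H$ and ${^{Z_0} TM}$ are stably isomorphic oriented rank $2n$ bundles over the connected $2n$-manifold $M$.

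The key classification input I would invoke is that, over a connected $2n$-manifold, two stably isomorphic oriented rank $2n$ bundles with the same Euler number are isomorphic. This follows from the fibration $\SO/\SO(2n) \to \mathrm{BSO}(2n) \to \mathrm{BSO}$ whose fiber is $(2n-1)$-connected with $\pi_{2n}(\SO/\SO(2n)) \cong \ZZ$ generated by the classifying map of $TS^{2n}$, together with the facts that $[M^{2n}, S^{2n}] \cong \ZZ$ via degree and that degree is detected by the pullback of the Euler class. Since Stiefel-Whitney classes are stable invariants and the top one is the mod $2$ reduction of the Euler class, the discrepancy
\[
   \chi(i^* H) - \chi({^{Z_0} TM}) = 2k
\]
is even. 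To realize this shift geometrically, I would introduce the bundle operation $E \mapsto E \sharp k TS^{2n}$ (regluing over a small disk by the $k$th power of the characteristic map $S^{2n-1} \to \SO(2n)$ of $TS^{2n}$), which adjusts $\chi$ by exactly $2k$ by a count of transverse zeros, and define $Z$ to be the disjoint union of $Z_0$ with $|k|$ small null-homologous spheres placed in the negative part of $M \setminus Z_0$ if $k > 0$ and in the positive part if $k < 0$ (if $Z_0 = \emptyset$, the side is fixed by the sign of $k$). Granting that ${^Z TM} \simeq {^{Z_0} TM} \sharp k TS^{2n}$, the Euler numbers of $i^* H$ and ${^Z TM}$ now agree, and the classification principle yields the desired isomorphism.

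The main obstacle is the geometric-to-algebraic identification ${^Z TM} \simeq {^{Z_0} TM} \sharp k TS^{2n}$: one must check that inserting an extra sphere into the folding locus modifies the diagonal gluing $\Id \oplus (-1)$ (from the definition of the $Z$-tangent bundle) by precisely the characteristic map of $TS^{2n}$, with the correct side convention controlling the sign of the shift. This is the local model computation borrowed from \cite[\S 3.9]{eliashberg:foldingtype}, and it is the step most vulnerable to sign and framing errors; everything else in the argument is a packaging of standard obstruction theory.
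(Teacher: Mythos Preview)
Your proposal is correct and follows essentially the same route as the paper: stable isomorphism via Lemma~\ref{lem:stableisom}, the even Euler-number discrepancy from stability of Stiefel--Whitney classes, the classification principle using the $(2n-1)$-connected fiber $\SO/\SO(2n)$, the $\sharp\, k\, TS^{2n}$ operation, and finally the modification of $Z_0$ by small spheres with the Eliashberg computation from \cite[\S 3.9]{eliashberg:foldingtype}. Your identification of the Eliashberg local model as the one genuinely nontrivial ingredient is exactly right; the paper likewise defers that step to the cited reference.
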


%%%%%%%%%%%%%%%%%%%%%%%%%%%%%%%%%%%%%%%%%%%%%%%%%%%%%%%%%%%%%%%%%%%%%%%%%%%%%%
%%%%%%%%%%%%%%%%%%%%%%%%%%%%%%%%%%%%%%%%%%%%%%%%%%%%%%%%%%%%%%%%%%%%%%%%%%%%%%

\section{Second Instance of the h-Principle}
\label{sec:second}

Throughout this section, let $M$ be an $m$-dimensional manifold
with a hypersurface $Z$, and let $N$ be an $(m+1)$-dimensional
manifold with a 1-dimensional foliation $\cL$.
The following notions are due to Eliashberg~\cite{eliashberg:surgery}.

\begin{definition}
   A map $f: M \to N$ is a {\em $Z$-immersion relative to $\cL$},
if near any point $p \in M \setminus Z$ there are coordinates
$y_1 , \ldots , y_{m+1}$ in $N$ adapted to the foliation (i.e., each leaf
is a level set of the first $m$ coordinates) where the induced map
to each level set of $y_{m+1}$ is regular, and if near any $p \in Z$
and near its image there are coordinates centered at those points
and adapted to the foliation where $f$ becomes
\[
   (x_1 , x_2 , \ldots , x_m ) \longmapsto
   (x_1^ 2 , x_2 , \ldots , x_m , 0) \ .
\]
\end{definition}

In the adapted coordinates $x_i$, the hypersurface $Z$ is given by $x_1 = 0$.
Loosely speaking, a $Z$-immersion relative to $\cL$ is a
$Z$-immersion to the leaf space of $\cL$.
The definition extends to higher-dimensional foliations
whose codimension is equal to the dimension of $M$.

\begin{lemma}
\label{lem:zimmersion}
   Let $\widetilde \omega$ be a closed 2-form of maximal rank in $N$
whose kernel is the tangent space to the leaves of $\cL$.
If $f: M \to N$ is a $Z$-immersion relative to $\cL$, then
$f^* \widetilde \omega$ is a folded symplectic form on $M$
with folding hypersurface $Z$.
\end{lemma}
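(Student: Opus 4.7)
The plan is to verify the three defining properties of a folded symplectic form by working in the local charts supplied by the two hypotheses. Closedness of $f^*\widetilde\omega$ is immediate since $f^*$ commutes with $d$, so what remains is (i) nondegeneracy on $M\setminus Z$, (ii) transverse vanishing of $(f^*\widetilde\omega)^n$ along $Z$, and (iii) maximal rank of $\iota^*(f^*\widetilde\omega)$ on $Z$.

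The first step I would take is to put $\widetilde\omega$ into a descended normal form on any chart of $N$ adapted to $\cL$. In adapted coordinates $y_1,\ldots,y_{m+1}$ in which the leaves are the curves $y_1=\cdots=y_m=\mathrm{const}$, the hypothesis $\ker\widetilde\omega=T\cL$ forbids all terms involving $dy_{m+1}$, and closedness then forces the remaining coefficients to be independent of $y_{m+1}$. Hence $\widetilde\omega=\sigma^*\bar\omega$, where $\sigma(y_1,\ldots,y_{m+1})=(y_1,\ldots,y_m)$ and $\bar\omega$ is a closed nondegenerate $2$-form on the $m$-dimensional local leaf space; the maximal-rank hypothesis forces $m=2n$ even, so $\bar\omega$ is symplectic.

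For (i), near any $p\in M\setminus Z$ the definition of $Z$-immersion relative to $\cL$ provides an adapted chart in which $\sigma\circ f$ is a local diffeomorphism onto an open subset of the leaf space, so $f^*\widetilde\omega=(\sigma\circ f)^*\bar\omega$ is symplectic. For (ii) and (iii), I would use the fold chart near $p\in Z$ — itself adapted to $\cL$ by definition — in which $f^*\widetilde\omega=g^*\bar\omega$ with $g(x_1,\ldots,x_m)=(x_1^2,x_2,\ldots,x_m)$. Writing $\bar\omega^n=h\,dy_1\wedge\cdots\wedge dy_m$ with $h$ nowhere zero, the computation
\[
(f^*\widetilde\omega)^n = g^*(\bar\omega^n) = 2x_1\,h(x_1^2,x_2,\ldots,x_m)\,dx_1\wedge\cdots\wedge dx_m
\]
exhibits the top wedge as vanishing exactly on $\{x_1=0\}=Z$ and transversally, since its $x_1$-derivative at $x_1=0$ is $2h\neq 0$. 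On $Z$ we have $df_1=2x_1\,dx_1=0$, so $\iota^*(f^*\widetilde\omega)$ is the pullback of $\bar\omega$ by the hypersurface inclusion $(x_2,\ldots,x_m)\hookrightarrow(0,x_2,\ldots,x_m)$; the kernel of a symplectic form restricted to a hyperplane equals its intersection with the $1$-dimensional symplectic complement, hence has dimension $0$ or $1$, and parity of the rank forces dimension $1$, giving maximal rank $2n-2$.

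I expect no genuine obstacle: the two hypotheses pin down normal forms for $\widetilde\omega$, $\cL$, and $f$ simultaneously, and the lemma reduces to the explicit fold computation above. The only small point of care is that the fold chart for $f$ must also be a chart in which the descent $\widetilde\omega=\sigma^*\bar\omega$ holds, which is automatic because fold charts are themselves adapted to $\cL$ by definition.
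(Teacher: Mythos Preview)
Your proof is correct and follows the same route as the paper: both factor $f^*\widetilde\omega$ through the local leaf space by observing that $\widetilde\omega$ is basic for $\cL$ (your descent $\widetilde\omega=\sigma^*\bar\omega$), so that $f^*\widetilde\omega$ equals the pullback of a symplectic form by the local $Z$-immersion $\sigma\circ f$. The only difference is that the paper simply invokes the introductory remark that the pullback of a symplectic form by a $Z$-immersion is folded symplectic, whereas you carry out that verification explicitly via the computation of $(g^*\bar\omega)^n$ and the rank of the restriction to $Z$.
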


The reason is simply that the form $\widetilde \omega$ induces
a symplectic form in the local leaf spaces and that the composition
of $f$ with the local quotient maps is a $Z$-immersion.

\begin{proof}
   Let $p \in M$.
There is a neighborhood $\cU$ of $f(p)$ where we have a trivialization
$\cU \simeq \cF _\cU \times \cL_\cU$,
given in local coordinates centered at $f(p)$ by
$(x_1 , \ldots , x_{m+1} ) \mapsto ((x_1, \ldots , x_m), x_{m+1})$,
the set $\cF_\cU$ being a leaf space (say the level zero of $x_{m+1}$),
and $\cL_\cU$ a typical leaf (say the level zero of $(x_1, \ldots , x_m)$).
The restriction of $\widetilde \omega$ to $\cF_\cU$ is a
symplectic form, $\omega_\cU$.
The composition $g_\cU : f^{-1} (\cU) \to \cF_\cU$ of $f$
with the projection to $\cF_\cU$ is a $(Z \cap \cU)$-immersion,
so that $g_\cU^* \omega_\cU$ is a folded symplectic form
with folding hypersurface $Z \cap \cU$.
The result follows from the fact that $f^* \widetilde \omega$
on $f^{-1} (\cU)$ coincides with $g_\cU ^* \omega_\cU$.
\end{proof}

We now turn to the formal analogue of a $Z$-immersion.

\begin{definition}
   A bundle map $F: TM \to TN$ is a {\em $Z$-monomorphism
relative to $\cL$}, if $F |_{T(M \setminus Z)}$ is transverse to $\cL$,
and if each $p \in Z$ admits a neighborhood
$\cU$ where $F |_{T \cU}$ is the differential of some
$(Z \cap \cU)$-immersion relative to $\cL$.
\end{definition}

The following lemma is a direct consequence of Eliashberg's result
in~\cite[\S 6.3]{eliashberg:surgery}, where he extends to the case
of foliations the result described in the introduction.

\begin{lemma}
\label{lem:second}
Let $N = M \times \RR$ be equipped with a decomposition
$TN \simeq H \oplus L$, where $L$ is a line field, and
let $\cL$ be the corresponding 1-dimensional foliation.
Let the hypersurface $Z$ be such that every
connected component of $M \setminus Z$ is open.
Then, for every $Z$-monomorphism $F: TM \to TN$ relative to $\cL$,
there exists a $Z$-immersion $f: M \to N$ relative to $\cL$
whose differential $df$ is homotopic to $F$ through
$Z$-monomorphisms relative to $\cL$.
\end{lemma}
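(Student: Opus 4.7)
The plan is to invoke Eliashberg's h-principle for fold maps into foliated targets, as developed in \cite[\S 6.3]{eliashberg:surgery}, and verify that the hypotheses of that theorem are precisely what we have at hand. Eliashberg's foliated statement asserts that, given an $m$-manifold $M$ with a hypersurface $Z$ satisfying the openness condition on $M \setminus Z$, and given a target manifold of dimension $m+1$ equipped with a codimension-$m$ foliation, every formal solution of the foliated fold relation (namely, a $Z$-monomorphism relative to the foliation) is homotopic through formal solutions to a holonomic one (namely, the differential of a $Z$-immersion relative to the foliation).

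First I would check that the dimensions and codimensions match: here $N = M \times \RR$ has dimension $m+1$ and $\cL$ is one-dimensional, so its codimension equals $m = \dim M$, exactly as required. The openness hypothesis on $M \setminus Z$ is already part of our statement, matching the corresponding assumption in \cite{eliashberg:surgery}. Finally, the local normal forms used in Section~\ref{sec:second} to define a $Z$-immersion relative to $\cL$ (adapted coordinates carrying the map to a fold $x_1 \mapsto x_1^2$ in the transverse-to-$\cL$ direction and to regularity within each leaf slice) and a $Z$-monomorphism relative to $\cL$ (transversality to $\cL$ off $Z$, plus local realization as the differential of a $Z$-immersion near $Z$) coincide with Eliashberg's foliated fold maps and their formal solutions. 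The splitting $TN \simeq H \oplus L$ plays only an auxiliary role: it provides a horizontal complement to $\cL$ that is convenient for phrasing transversality of $F$ on $T(M \setminus Z)$, but it is not required by Eliashberg's theorem itself.

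With the hypotheses verified, the theorem yields the $Z$-immersion $f: M \to N$ relative to $\cL$ together with the homotopy from $F$ to $df$ through $Z$-monomorphisms relative to $\cL$, as claimed. The main work, were one to re-establish this from scratch, would lie in the h-principle itself: the reduction from the unfoliated equidimensional fold h-principle of \cite{eliashberg:foldingtype} to the foliated version requires promoting the fold condition to a parametric problem over the leaf space of $\cL$ and then applying holonomic approximation. Since \cite{eliashberg:surgery} already furnishes this extension, our task here reduces to the translation above between Eliashberg's formulation and the one used in this paper.
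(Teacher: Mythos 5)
Your proposal matches the paper exactly: the paper gives no independent argument for this lemma, stating only that it is a direct consequence of Eliashberg's extension of the fold h-principle to foliated targets in \cite[\S 6.3]{eliashberg:surgery}, which is precisely the theorem you invoke and whose hypotheses (codimension of $\cL$ equal to $\dim M$, openness of the components of $M \setminus Z$, matching local normal forms) you verify. Your added remarks on the auxiliary role of the splitting $TN \simeq H \oplus L$ are consistent with the paper's use of it only for the weaker notion of $Z$-monomorphism relative to $L$.
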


Part of the work to prove Theorem~A consists in showing
a (general) procedure to deform by homotopy a weaker bundle map
into a $Z$-monomorphism relative to $\cL$.
The weaker map is of the following type:

\begin{definition}
   A bundle map $F: TM \to TN \simeq H \oplus L$ is a
{\em $Z$-monomorphism relative to $L$},
if $\pi_L \circ F |_{T(M \setminus Z)}$ and $\pi_L \circ F |_{TZ}$
are fiberwise injective, $\pi_L : TN \to H$ being the projection
along $L$, and if there is a tubular neighborhood
$\cT$ of $Z$ in $M$, with a fiber involution $\tau : \cT \to \cT$
whose set of fixed points is $Z$, where $F \circ d\tau = F$.
\end{definition}

%%%%%%%%%%%%%%%%%%%%%%%%%%%%%%%%%%%%%%%%%%%%%%%%%%%%%%%%%%%%%%%%%%%%%%%%%%%%%%
%%%%%%%%%%%%%%%%%%%%%%%%%%%%%%%%%%%%%%%%%%%%%%%%%%%%%%%%%%%%%%%%%%%%%%%%%%%%%%

\section{Conclusion of the Proof}
\label{sec:proof}

Let $M$ be a compact $2n$-dimensional manifold with
a stable almost complex structure $J$.
Then $J$ is representable by a structure of complex
vector bundle on $TM \oplus \RR^2$, and any two such representatives
are isomorphic, by Bott periodicity~\cite{bott:stable}.
Let $N = M \times \RR$ and denote still by $J$
an induced structure of complex vector
bundle on $TN \oplus \RR$ as in \S~\ref{sec:first}.

By Lemma~\ref{lem:first},
there exists on $N$, in any degree 2 cohomology class,
a closed 2-form $\widetilde \omega$ of maximal rank
compatible with $J$.
Let $\widetilde \omega$ be such a form and let
$L$ be the line field given by its kernel,
with associated foliation $\cL$.

By Lemma~\ref{lem:zimmersion},
the existence of a folded symplectic form on $M$
with some folding hypersurface $Z$ is guaranteed by the
existence of a $Z$-immersion $f: M \to N$ relative to $\cL$.
We will seek such a $Z$-immersion which is homotopic to the
embedding at level zero $i : M \hookrightarrow N$,
so that $f^* = i^*$ in cohomology.
If $M$ is connected and $Z$ is nonempty,
then $M \setminus Z$ is open.

By Lemma~\ref{lem:second},
in order to produce a $Z$-immersion $f$ relative to $\cL$
for $M \setminus Z$ open, it suffices to show that there exists
a $Z$-monomorphism $F : TM \to TN$ relative to $\cL$.
So that $f$ is homotopic to $i$, we search for an $F$
covering a map $M \to N$ homotopic to $i$.

By Lemma~\ref{lem:isomorphism}, we have
a vector bundle isomorphism $F_0 : {^Z TM} \to i^* H$
for some hypersurface $Z$, which may be chosen so that
each connected component of $M \setminus Z$ is open.

The map $F_0$ may be translated into a fiberwise injective bundle map
$F_1 : {^Z TM} \to H$ covering the immersion $i : M \to N$.
This map guarantees the existence of a (canonically
unique up to homotopy) almost $Z$-monomorphism
$F_2 : TM \to H \oplus L$ relative to $L$,
still covering $i$, defined by the following recipe:

Choose a trivial line bundle $V$ over $Z$ spanned by a
vector field on $M$ transverse to $Z$
pointing from $M^-$ to $M^+$.
The quotient ${^Z TM} / V$ is isomorphic to $TZ$,
so that ${^Z TM} |_Z \simeq TZ \oplus V$.
We obtain $TM$ by gluing ${^Z TM} |_{M \setminus M^-}$
to ${^Z TM} |_{M \setminus M^+}$ by the constant
diagonal map $\Id \oplus (-1) : Z \to \GL (TZ \oplus V)$.
Using this recovery of $TM$ from ${^Z TM}$,
we may define $F_2$ equal to $F_1 \oplus 0$ outside a tubular
neighborhood $\cT$ of $Z$ in $M$, and on $\cT$ set
\[
   F_2 ( u \oplus v ) = F_1 ( u \oplus \psi v ) \oplus 0 \ ,
\]
with respect to the decomposition
${^Z TM} | _\cT \simeq \pi^* (TZ) \oplus \pi^* V$,
where $\pi: \cT \to Z$ is the tubular projection,
and $\psi : \cT \to [0,1]$ is equal to 1 outside a
narrower tubular neighborhood of $Z$ and vanishes
exactly over $Z$.
By choosing $\psi$ symmetric with respect to an
involution $\tau : \cT \to \cT$ whose set of
fixed points is $Z$, we obtain $F_2$ invariant under $\tau$.

For each $p \in Z$, choose a connected neighborhood $\cU$ whose
image $i (\cU)$ is contained in a connected trivialization
$\cN_\cU \simeq \cF_\cU \times \cL_\cU$ of the foliation $\cL$,
the set $\cF_\cU$ being a local leaf space and $\cL_\cU$ a leaf segment.
Let $\pi_\cU : \cN_\cU \to \cF_\cU$ be the projection to the first factor.
The composition $F_{2,\cU} = d \pi_\cU \circ F_2 |_\cU: T\cU \to T\cF_\cU$
is a $(Z \cap \cU)$-monomorphism.
\[
\begin{array}{rcl}
   & & T \cN_\cU \\
   & {^{F_2} \nearrow} \phantom{99999} & \downarrow _{d \pi_\cU} \\
   T \cU & \stackrel{F_{2,\cU}}{\longrightarrow} & T\cF_\cU
\end{array}
\]
By~\cite[\S 2.2]{eliashberg:foldingtype},
the composition $F_{2,\cU}$ is homotopic,
through $(Z\cap \cU)$-monomorphisms, to the differential
$d g_\cU$ of a $Z$-immersion $g_\cU : \cU \to \cF_\cU$.
Moreover, if over a closed subset $\cW \subset \cU$,
the composition $F_{2,\cU}$ was already
the differential of a map, then there is a homotopy
which is constant on $\cW$.
Let $G_t : T \cU \to T \cF_\cU$, $1 \leq t \leq 2$,
be a homotopy such that $G_1 = dg_\cU$ and $G_2 = F_{2,\cU}$.

Choose a $(Z \cap \cU)$-immersion $\widetilde g_\cU : \cU \to \cN _\cU$
relative to $\cL$ such that $\pi_\cU \circ \widetilde g_\cU = g_\cU$.
We can always pick a $\widetilde g_\cU$ extending a sensible
preassigned lift over a closed subset $\cW$ of $\cU$.

By the covering homotopy property for the fibering
$T \cN _\cU \to T\cF _\cU$, there is a lifted homotopy
$\widetilde G_t : T \cU \to T \cN_\cU$, $1 \leq t \leq 2$,
through $Z$-monomorphisms relative to $L$ such that
$\widetilde G_1 = d\widetilde g_\cU$ and
$d \pi_\cU \circ \widetilde G_t = G_t$ for all $t$.
If $G_t$ was constant on a closed subset $\cW$,
then we may choose $\widetilde G_t$ also constant on $\cW$.
\[
\begin{array}{rcl}
   & & T \cN_\cU \\
   & {^{\widetilde G_t} \nearrow} \phantom{99999} & \downarrow _{d \pi_\cU} \\
   T \cU & \stackrel{G_t}{\longrightarrow} & T\cF_\cU
\end{array}
\]
Since $d \pi_\cU \circ \widetilde G_2 = G_2 = F_{2,\cU} = d \pi_\cU \circ F_2$,
the difference $\widetilde G_2 - F_2$ takes values in $L = \ker d\pi_\cU$.
By fiberwise homotopy, we may deform the vertical
component of $\widetilde G_2$ to make it equal to $F_2$.
Without loss of generality, we hence assume that $\widetilde G_t$
also satisfies $\widetilde G_2 = F_2$, and that all maps
are invariant with respect to the same involution $\tau$.

Take a riemannian metric symmetric with respect to $\tau$.
For a point $p \in Z$, choose spherical neighborhoods
$\cU_1$ and $\cU_2$ in $\cT$, consisting of points
at a riemannian distance less than $\varepsilon$ and
$4 \varepsilon$ from $p$, with $\varepsilon > 0$ small
enough for the exponential map to be injective and for
the closure of $\cU_2$ to be contained in the neighborhood
$\cU$ above.
Choose a smooth function $\rho : \cU_2 \to [1,2]$
satisfying $\rho (q) =2$ if the distance from $p$ to $q$
is greater than $3 \varepsilon$, and $\rho (q) =1$
if the distance from $p$ to $q$ is less than $2 \varepsilon$.
Define $F_3 : TM \to TN$ by
\[
   F_3 = \begin{cases}
   F_2 & \quad \mbox{ on } M \setminus \cU_2 \\
   \widetilde G_{\rho (q)} & \quad \mbox{ over points }
   q \in \cU_2 \setminus \cU_1 \\
   d \widetilde g_\cU & \quad \mbox{ on }\cU_1 \ . \end{cases}
\]
Then $F_3$ is a $Z$-monomorphism with respect to $L$
whose restriction to $\cU_1$ is the differential of a
$(Z \cap \cU_1)$-immersion relative to $\cL$.

Since $Z$ is compact, take a subcover of $Z$ in $M$ by a finite
number of the $\cU_1$'s.
Apply iteratively the construction of the previous paragraph
to an ordering of the $\cU_1$'s,
starting first from $F_2$ and then from its replacements $F_3$, etc.
At each stage, the homotopy should be taken constant over
the closure $\cW$ of the previous $\cU_1$'s.

We have thus concluded the proof of Theorem~A
in the compact case by showing the existence of a
$Z$-monomorphism relative to $\cL$ covering
a map homotopic to $i$.

\begin{remark}
If $M$ is a compact oriented 2-dimensional manifold,
folded symplectic forms on $M$ are generic 2-forms.
The cohomology class of a 2-form is determined by its total integral.
The isomorphism classes of complex structures on $TM \oplus \RR^2$
are determined by the Euler number, which is an even integer.
By changing $Z$ as in \S~\ref{sec:isomorphism}, any even number
may be obtained as Euler number for ${^Z TM}$,
thus fitting any given stable complex structure.
Let $\omega$ be a 2-form which vanishes transversally
on an appropriate $Z$.
By changing the values of $\omega$ over $M \setminus Z$,
any real number may be obtained as total integral of $\omega$.
Hence, Theorem~A holds easily (and not interestingly)
for compact 2-manifolds.
\end{remark}

For the noncompact case, a statement stronger than Theorem~A is true.
If a $2n$-dimensional manifold $M$ is orientable, connected,
not compact and $TM \oplus \RR^2$ has a complex structure,
then $M$ has an almost complex structure because it retracts
to a $(2n-1)$-dimensional cell complex~\cite[Thm.8.1]{milnor:hcobordism}
and $\Pi_q (\SO (2n) / \UU (n)) \simeq \Pi_q (\SO (2n+2) / \UU (n+1))$
for $q \leq 2n-2$.
By Gromov's theorem~\cite{gromov:stable}, $M$ admits a
compatible symplectic form in any degree 2 cohomology class.

Let $E$ be a rank $2m$ oriented real bundle over $M$.
The condition $W_3 (E) =0$ ensures the existence over the
3-skeleton of $M$ of a section for the associated
$(\SO (2m) / \UU (m))$-bundle.
By Bott's periodicity, $\Pi_q (\SO (6) / \UU (3)) =0$ for $q < 5$.
Therefore, the Hirzebruch-Hopf fact~\cite{hirzebruch-hopf}
that $W_3 (M) =0$ for any orientable 4-manifold,
asserts the existence of a stable complex structure
on any such manifold.

%%%%%%%%%%%%%%%%%%%%%%%%%%%%%%%%%%%%%%%%%%%%%%%%%%%%%%%%%%%%%%%%%%%%%%%%%%%%%%
%%%%%%%%%%%%%%%%%%%%%%%%%%%%%%%%%%%%%%%%%%%%%%%%%%%%%%%%%%%%%%%%%%%%%%%%%%%%%%

\vspace{3ex}

\end{document}